\newtheorem{theorem}{Theorem}[section]
\theoremstyle{definition}
\newtheorem{mthm}[theorem]{Main Theorem}
\theoremstyle{remark}
\numberwithin{equation}{section}
\newtheorem{lem}{Lemma}[section]
\numberwithin{equation}{section}
\newtheorem*{cor*}{Corollary}
\newtheorem*{thm*}{Theorem}
\theoremstyle{definition}
\newtheorem{defi}{Definition}[section]
\theoremstyle{remark}
\newtheorem{rem}[lem]{Remark}
\newcommand{\labl}[1]{\label{lemma:#1}}
\newcommand{\refl}[1]{Lemma~\ref{lemma:#1}}
\newcommand{\labmt}[1]{\label{mthm:#1}}
\newcommand{\refmt}[1]{Main Theorem~\ref{mthm:#1}}
\newcommand{\labeq}[1]{\label{eqn:#1}}
\newcommand{\refeq}[1]{(\ref{eqn:#1})}
\newcommand{\N}{\mathbb{N}}
\newcommand{\lf}{\left\lfloor}
\newcommand{\rf}{\right\rfloor}
\newcommand{\lc}{\left\lceil}
\newcommand{\rc}{\right\rceil}
\renewcommand{\lvert}{\left\vert}
\renewcommand{\rvert}{\right\vert}
\renewcommand{\tilde}{\widetilde}
\newcommand{\floor}[1]{\left\lfloor #1 \right\rfloor} 
\newcommand{\ceil}[1]{\left\lceil #1 \right\rceil}
\newcommand{\adkmu}{\mathscr{D}_{\mu,k}}
\newcommand{\admu}{\mathscr{D}_{\mu}}
\newcommand{\womega}{\ell}
\newcommand{\nuib}{\nu_i(\mathbf{b})}
\newcommand{\nuipb}{\nu_{i+1}(\mathbf{b})}
\newcommand{\ei}{\varepsilon_i}
\newcommand{\eim}{\varepsilon_{i-1}}
\newcommand{\li}{\ell_i}
\newcommand{\lip}{\ell_{i+1}}
\newcommand{\limo}{\ell_{i-1}}
\newcommand{\lxi}{|\mathbf{w}_i|}
\newcommand{\lxip}{|\mathbf{w}_{i+1}|}
\newcommand{\lxim}{|\mathbf{w}_{i-1}|}
\newcommand{\Nmuk}[2]{\mathscr{N}_{ {#1}, {#2} } }
\newcommand{\Nm}{\mathscr{N}_{\mu }}
\begin{document}

\title{Construction of $\mu$-normal sequences}

\author[M. G. Madritsch]{Manfred G. Madritsch}
\address[M. G. Madritsch]{1. Universit\'e de Lorraine, Institut Elie Cartan de Lorraine, UMR 7502, Vandoeuvre-l\`es-Nancy, F-54506, France;\newline
2. CNRS, Institut Elie Cartan de Lorraine, UMR 7502, Vandoeuvre-l\`es-Nancy, F-54506, France}
\email{manfred.madritsch@univ-lorraine.fr}

\author[B. Mance]{Bill Mance} \address[B. Mance]{ Department of
  Mathematics, University of North Texas, General Academics Building
  435, 1155 Union Circle \#311430, Denton, TX 76203-5017, USA}
\email{mance@unt.edu}



\subjclass{11K16, 11A63}

\date{\today}


\keywords{normal number, numeration system, shift invariant measure,
  specification proptery}

\begin{abstract}
  In the present paper we extend Champernowne's construction of normal
  numbers to provide sequences which are generic for a given invariant
  probability measure, which need not be the maximal one. We present a
  construction together with estimates and examples for normal numbers
  with respect to L\"uroth series expansion, continued fractions
  expansion or $\beta$-expansion.
\end{abstract}

\maketitle

\section{Introduction}
Let $q\geq2$ be a positive integer, then every real $x\in[0,1]$ has a $q$-adic
representation of the form
\[
  x=\sum_{h=1}^\infty d_h(x)q^{-h}
\]
with $d_h(x)\in\mathcal{D}:=\{0,1,\ldots,q-1\}$. We call a number
$x\in[0,1]$ {\it normal} with respect to the base $q$ if for any $k\geq1$
and any block $\mathbf{b}=b_1\ldots b_k$ of $k$ digits the frequency
of occurrences of this block tends to the expected one, namely
$q^{-k}$. In particular, let $N_n(\mathbf{b},x)$ be the number of
occurrences of $\mathbf{b}$ among the first $n$ digits, \textit{i.e.}
\[
N_n(\mathbf{b},x)=\#\left\{0\leq h<n:d_{h+1}(x)=b_1,\ldots,d_{h+k}(x)=b_k\right\}.
\]
Then we call $x\in[0,1]$ {\it normal of order $k$} in base $q$ if for every
block $\mathbf{b}$ of length $k$ we have
\[
\lim_{n\to\infty}\frac{N_n(\mathbf{b},x)}{n}=q^{-k}.
\]
Furthermore, we call a number 
{\it absolutely normal} if it is normal in every base
$q\geq2$.

In 1909 Borel~\cite{borel1909:les_probabilites_denombrables} showed
that Lebesgue almost every real is absolutely normal. This motivated
people to look for a concrete example of such a number. It took
more than 20 years until 1933 when
Champernowne~\cite{Champernowne1933:construction_decimals_normal}
provided the first explicit construction by showing that the number
\[
0.1\,2\,3\,4\,5\,6\,7\,8\,9\,10\,11\,12\,13\,14\,15\,16\,\ldots
\]
is normal to base $10$. This construction was generalized to different
sequences (such as primes, polynomials etc.) and different numeration
systems (such as $\beta$-expansion, canonical number systems etc.).

In particular, polynomials and polynomial like sequences were
considered. Besicovitch
\cite{besicovitch1935:asymptotic_distribution_numerals} investigated
the sequence of squares. This was extended by Davenport and Erd{\H o}s
\cite{davenport_erdoes1952:note_on_normal} to polynomials with integer
coefficients. Schiffer \cite{schiffer1986:discrepancy_normal_numbers}
further extended this to polynomials with rational integers. Finally
Nakai and Shiokawa
\cite{nakai_shiokawa1992:discrepancy_estimates_class,
  nakai_shiokawa1990:class_normal_numbers} used real polynomials and
pseudo-polynomials, respectively. In parallel polynomial sequences
over the primes were used. Copeland and Erd{\H o}s
\cite{copeland_erdoes1946:note_on_normal} started by using the
sequence of primes, Nakai and Shiokawa
\cite{nakai_shiokawa1997:normality_numbers_generated} used integer
polynomials evaluated at the primes and finally Madritsch
\cite{madritschpear:construction_normal_numbers} used
pseudo-polynomials evaluated at the primes.

On the other hand the Champernowne construction was extended to
different underlying dynamical systems. Normal sequences for Bernoulli
shifts and continued fractions were already investigated by Postnikov
and
Pyatecki{\u\i}~\cite{postnikov_pyateckii1957:markov_sequence_symbols,
  postnikov_pyateckii1957:bernoulli_normal_sequences}, see also
Postnikov \cite{postnikov1960:arithmetic_modeling_random}. Furthermore
normal sequences for Markov shifts and intrinsically ergodic subshifts
were constructed by Smorodinsky and Weiss
\cite{smorodinsky_weiss1987:normal_sequences_markov}. A different
construction for continued fractions is due to Adler \textit{et
  al.}~\cite{adler_keane_smorodinsky1981:construction_normal_number}. Generalizations
to $\beta$-shifts are due to Bertrand-Mathis and
Volkmann~\cite{bertrand-mathis_volkmann1989:epsilon_k_normal} and Ito
and Shiokawa~\cite{ito_shiokawa1975:construction_eta_normal}. However,
the lather construct only a sequence with the right frequency of block
, but which is not admissible as an expansion in the
$\beta$-expansion. The normality of the Champernowne number with
respect to numeration systems in the Gaussian integers was
investigated by Dumont \textit{et al.}
\cite{dumont_grabner_thomas1999:distribution_digits_in}. An important
feature of the Champernowne sequence in dynamical systems fulfilling
the specification property is that it is generic for the maximal
measure, which was shown by Bertrand-Mathis
\cite{bertrand-mathis1988:points_generiques_de}.

Most of the above constructions have in common, that they are aiming
for normal numbers or equivalently sequences that are generic for the
maximum measure. Moreover most of these constructions consider the
full shift, which means, that there are no restrictions on the
concatenation of blocks. In the present paper, however, we
build a framework for the construction of normal sequences in a broad
class of dynamical systems. In particular, we want that for any given
shift invariant measure $\mu$ one can take a sequence of words
$\{\mathbf{w}_i\}_{i\geq1}$ together with a sequence of repetitions $(\ell_i)_{i\geq1}$, satisfying some growing condition (which we
will call $\mu$-good) in order to construct a $\mu$-generic word.

As an example for such a sequence of words we will modify the
Champernowne construction such that we get arbitrarily close to any
given shift invariant measure in a dynamical system fulfilling the
specification property. This is motivated by recent constructions by
Altomare and Mance
\cite{altomare_mance2011:cantor_series_constructions} and Mance
\cite{mance2011:construction_normal_numbers,
  mance2012:cantor_series_constructions}. 

\section{Definitions and statement of results}
Our basis is a symbolic dynamical system that fulfills the
specification property, which we will define in the sequel. In our
definitions we mainly follow the articles of Bertrand-Mathis and
Volkmann \cite{bertrand-mathis1988:points_generiques_de,
  bertrand-mathis_volkmann1989:epsilon_k_normal} as well as the book
of Lind and Marcus
\cite{lind_marcus1995:introduction_to_symbolic}. Let $A$ be a fixed
(possibly infinite) alphabet. We denote by $A^+$ the semigroup
generated by $A$ under catenation. Let $\varepsilon$ denote the empty
word and $A^*=A^+\cup\{\varepsilon\}$. The length of a word
$\omega=a_1a_2\ldots a_k$ with $a_i\in A$ for $1\leq i\leq k$ is
denoted by $\lvert\omega\rvert=k$ and we write $A^k$ for the set of
words of length $k$ (over $A$). Furthermore let $A^\N$ be the set of
infinite words (over $A$).

A set $\mathcal{L}\subset A^*$ is called a {\it language}.
We say that a language $\mathcal{L}$ fulfills the {\it specification
  property} if there exists a positive integer $j$ such that for any
two words $\mathbf{a},\mathbf{b}\in \mathcal{L}$ there exists a word
$\mathbf{u} \in \mathcal{L}$ with $\lvert \mathbf{u}\rvert\leq j$ such
that $\mathbf{a} \mathbf{u} \mathbf{b} \in \mathcal{L}$. Informally
speaking that means that we can ``glue'' together any two words in
that language by a finite amount of glue. For any pair of finite words
$\mathbf{a}$ and $\mathbf{b}$ we fix a
$\mathbf{u}_{\mathbf{a},\mathbf{b}}$ with
$\lvert\mathbf{u}_{\mathbf{a},\mathbf{b}}\rvert\leq j$ such that
$\mathbf{a}\mathbf{u}_{\mathbf{a},\mathbf{b}}\mathbf{b}\in\mathcal{L}$. Then
for $\mathbf{a},\mathbf{a}_1,\ldots,\mathbf{a}_m\in\mathcal{L}$ and
$n\in\N$ we write
$$\mathbf{a}_1\odot\mathbf{a}_2\odot\cdots\odot\mathbf{a}_m
:=\mathbf{a}_1\,\mathbf{u}_{\mathbf{a}_1,\mathbf{a}_2}\,\mathbf{a}_2\,
\mathbf{u}_{\mathbf{a}_2,\mathbf{a}_3}\,\mathbf{a}_3\cdots
\,\mathbf{a}_{m-1}\,\mathbf{u}_{\mathbf{a}_{m-1},\mathbf{a}_m}\,\mathbf{a}_m$$
and recursively define
\[\mathbf{a}^{\odot 1}=\mathbf{a}\quad\text{and}\quad
\mathbf{a}^{\odot n}=\mathbf{a}\odot\mathbf{a}^{\odot(n-1)}
\quad\text{for }n\geq2.\]
For a language $\mathcal{L} \subset A^*$ let
$W^\infty=W^\infty(\mathcal{L})$ be the set of infinite words
generated by $\mathcal{L}$, \textit{i.e.} the set of sequences
$\omega=(a_i)_{i\geq1}$ with $a_ia_{i+1}\cdots a_k\in \mathcal{L}$ for
any $1\leq i<k<\infty$.

We introduce the discrete topology on $A$ and the corresponding
product topology on $A^\N$.  Let $\omega=(a_i)_{i\geq1}\in A^\N$, then
we define the shift operator $T$ as the mapping
$(T(\omega))_i=a_{i+1}$ for $i\geq1$. We associate with each language
$\mathcal{L}$ the symbolic dynamical system
$$S_\mathcal{L}=S=(W^\infty,\mathfrak{B},T,I),$$
where $W^\infty=W^\infty(\mathcal{L})$; $\mathfrak{B}$ is the
$\sigma$-algebra generated by all cylinder sets of $A^\N$,
\textit{i.e.} sets of the form
$$c(\mathbf{w})=[\mathbf{w}]=\{a_1a_2a_3\ldots\in A^\N\colon a_1a_2\ldots
a_n=\mathbf{w}\}$$ for some word $\mathbf{w}\in A^n$ of length $n$;
$T$ is the shift operator; and $I$ is the set of all $T$-invariant
probability measures $\mu$ on $\mathfrak{B}$. We will also write
$\mu(\mathbf{w})$ for $\mu(c(\mathbf{w}))$. Note that $W^\infty$ is
invariant under $T$, \textit{i.e.} $\forall\omega\in W^\infty\colon
T\omega\in W^\infty$, and closed with respect to this topology.


Now we fix a $T$-invariant measure $\mu\in I$. A word $\mathbf{b}\in
W(\mathcal{L}^*)$ is {\it $\mu$-admissible} if $\mu(\mathbf{b}) \neq 0$.  Let
$\admu$ denote the set of $\mu$-admissible words and let $\adkmu$
denote the set of $\mu$-admissible words of length $k$.  Given words
$\mathbf{b}\in A^k$ and $\omega=a_1a_2a_3\ldots\in A^\N$ we will let $N_n(\mathbf{b},\omega)$ denote
the number of times the word $\mathbf{b}$ occurs starting in position
no greater than $n$ in the word $\omega$, \textit{i.e.}
$$N_n(\mathbf{b},\omega)=\#\{0\leq i<n\colon a_{i+1}a_{i+2}\cdots a_{i+k}=\mathbf{b}\}.$$
If $\mathbf{w}$ is finite we will often write $N(\mathbf{b},\mathbf{w})$ in
place of $N_{|\mathbf{w}|-\lvert\mathbf{b}\rvert+1}(\mathbf{b},\mathbf{w})$.


The following definition is a generalization of the concept of
$(\varepsilon,k)$-normality originally due to Besicovitch
\cite{besicovitch1935:asymptotic_distribution_numerals}. 
\begin{defi}
  Suppose that $0<\epsilon < 1$, $k$ is a positive integer and $\mu
  \in I$.  A word $\mathbf{w}$ is called {\it
    $(\epsilon,k,\mu)$-normal} if for all $t \leq k$ and words
  $\mathbf{b}$ in $\mathscr{D}_{\mu,t}$, we have
  \[
  \mu(\mathbf{b})|\mathbf{w}|(1-\epsilon) \leq N(\mathbf{b},\mathbf{w})
  \leq \mu(\mathbf{b})|\mathbf{w}|(1+\epsilon).
  \]

  An infinite word $\omega\in A^\N$ is called {\it $\mu$-normal of
    order $k$} if for every admissible word $\mathbf{b}$ of length $k$
  we have
  \[
  \lim_{n\to\infty}\frac{N_n(\mathbf{b},\omega)}n=\mu(\mathbf{b}).
  \]
  We denote by $\Nmuk{\mu}{k}$ the set of all $\mu$-normal words of
  order $k$. Furthermore we call $\omega$ {\it $\mu$-normal} (or
  equivalently {\it generic for $\mu$}) if $\omega \in \Nm:=
  \bigcap_{k=1}^\infty \Nmuk{\mu}{k}$.
\end{defi}

Let $(k_i)_{i\geq1}$ be a sequence of positive integers. For $i\geq1$
let $\nu_i\colon A^{k_i}\to[0,1]$ be a finite-length shift-invariant
probability. Then we call $(\nu_i)_{i\geq1}$ an approximation scheme
for $\mu$, if $(\nu_i)_{i\geq1}$ converges weakly to $\mu\in I$
(written $\nu_i \to \mu$). Here we silently make the additional
assumptions that $\mathscr{D}_{\nu_i} \subset \admu$ \footnote{A
  version of our main theorem is still true if we drop the condition
  $\mathscr{D}_{\nu_i} \subset \admu$, but every example we will
  consider has this property.}  for all $i\geq1$ and that
$\nu_i(\mathbf{b})$ is eventually non-increasing in $i$.

Furthermore let $(\mathbf{w}_i)_{i\geq1}$ be a sequence of finite
words and $(\ell_i)_{i\geq1}$ be a non-decreasing sequence of positive
integers. Then we call $(\mathbf{w}_i,\ell_i)_{i\geq1}$ $\mu$-good
with respect to the approximation scheme $(\nu_i)_{i\geq1}$ if each
$\mathbf{w}_i$ is $(\varepsilon_i,k_i,\nu_i)$-normal satisfying
\begin{equation}\labeq{good1}
\frac {1} {\eim-\ei}=o( \lxi );
\end{equation}
\begin{equation}\labeq{good2}
\frac {\limo} {\li} \cdot \frac { \lxim} { \lxi }=o(i^{-1});
\end{equation}
\begin{equation}\labeq{good3}
\frac {1} {\li} \cdot \frac { \lxip} { \lxi }=o(1).
\end{equation}

Now we are able to state our main theorem.

\begin{mthm}\labmt{1} Let $(k_i)_{i\geq1}$ be a sequence of positive
  integers and let $\nu_i\colon A^{k_i}\to[0,1]$ be a finite-length
  shift-invariant probability, which is an approximation scheme for
  $\mu$. Furthermore let $(\mathbf{w}_i)_{i\geq1}$ be a sequence of
  finite words and $(\ell_i)_{i\geq1}$ be a non-decreasing sequence of
  positive integers. Suppose that $(\mathbf{w}_i,\ell_i)_{i\geq1}$ is
  $\mu$-good with respect to $(\nu_i)_{i\geq1}$, then for each
  $k\in[1,\limsup_{i\to\infty}k_i]\cap\N$, the infinite word
  $\omega=\mathbf{w}_1^{\odot \ell_1}\odot\mathbf{w}_2^{\odot
    \ell_2}\odot\cdots$ is $\mu$-normal of order $k$. Moreover, if
  $\limsup_{i\to\infty}k_i=\infty$, then $\omega$ is $\mu$-normal
\end{mthm}

We postpone the proof of \refmt{1} to Section
\ref{sec:proof_of_main_theorem} and start by presenting our
construction of a $\mu$-good sequence $(\mathbf{w}_i)_{i\geq1}$ in the
following section. In Section \ref{sec:proof_of_main_theorem} we build
up the toolbox and prove \refmt{1}. Finally, we apply our constructed
sequence of blocks $(\mathbf{w}_i)_{i\geq1}$ from Section
\ref{sec:construction} to different number systems. These number systems
are based on different requirements (finite or infinite digit set,
restrictions on the digit set, \textit{etc.}) and we need to adapt our
construction to these specific cases.

\section{The construction}\label{sec:construction}
Our construction is very similar to the Champernowne type construction
of Bertrand-Mathis and Volkmann
\cite{bertrand-mathis_volkmann1989:epsilon_k_normal,
  bertrand-mathis1988:points_generiques_de}. However, our goal is to
construct a normal sequence which is generic for any given shift
invariant measure. As a consequence of this general case our
construction is not so efficient in that it uses many repetitions of
certain blocks. A more careful control of the available words and
their distribution, would lead to a reduction in the number of copies
$\ell_i$ for special cases (\textit{cf.}  Vandehey
\cite{vandehey2013:simpler_normal_number}).

In our construction we have to face two main issues. The first one is
that our digit set might be infinitely large. This we can easily
circumvent by increasing the digit set in every step (\textit{i.e.} in
every $\mathbf{w}_i$). The other issue we have to face is that there
might be restrictions on the concatenation of words. For example, if
we take the golden mean as basis of a $\beta$-expansion, two
successive ones are forbidden in the expansion. However, concatenating
$1001$ and $1010$, which are admissible as such, yields the word
$10011010$, which is not admissible. Thus similar to above we use
the specification property in order to glue the words together.

Let $j$ be the maximum size of the padding given by the specification
property and let
$\mathcal{P}_{b,\womega}=\{\mathbf{p}_1,\ldots,\mathbf{p}_{b^\womega}\}$
be the set of all possible words of length $\womega$ of the alphabet
$A=\{0,1,\ldots,b-1\}$ of digits in base $b$.

Furthermore
$m_k=\min\{\mu(\mathbf{b}):\mathbf{b}\in\mathscr{D}_{\mu,k}\}$ for
$k\geq1$ and $M$ be an arbitrary large constant such that $M \geq
\frac1{m_\womega}$.

The central tool for our construction will be a weighted concatenation of the
words $\tilde{\mathbf{p}}_i$, \textit{i.e.}, 
\[
\mathbf{p}_{b,\womega, M}:= \mathbf{p}_1^{\odot\lc M\mu(\mathbf{p}_1)\rc}\odot \mathbf{p}_2^{\odot \lc M\mu(\mathbf{p}_2)\rc }\odot\cdots
 \odot \mathbf{p}_{b^\womega}^{\odot\lc M\mu(\mathbf{p}_{b^\womega})\rc}.
\]

In the following we show the $(\varepsilon,k)$-normality of
$\mathbf{p}_{b,\womega,M}$ for $k\leq\womega$. Thus it suffices to find
an $\varepsilon$ such that for all words $\mathbf{b}$ of length
$k\leq\womega$ we have
\begin{gather}\labeq{p:epsi:k}
(1-\varepsilon)\mu(\mathbf{b})\leq\frac{N(\mathbf{b},\mathbf{p}_{b,\womega,M})}{\lvert \mathbf{p}_{b,\womega,M}\rvert}
\leq (1+\varepsilon)\mu(\mathbf{b})
\end{gather}
To this end we need lower and upper bounds for the length of
$\mathbf{p}_{b,\womega,M}$ as well as lower and upper bounds for the
number of occurrences of a fixed block $\mathbf{b}$ within
$\mathbf{p}_{b,\womega,M}$.

Starting with the estimation of the length of
$\mathbf{p}_{b,\womega,M,j}$ we get as upper bound
\begin{align}\label{construction:length:upperbound}
\lvert \mathbf{p}_{b,\womega,M}\rvert
\leq\sum_{i=1}^{b^\womega}\lc M\mu(\mathbf{p}_i)\rc(j+\womega)
\leq M(j+\womega)\sum_{i=1}^{b^\womega}\mu(\mathbf{p}_i)+(j+\womega)b^\womega
=(j+\womega)\left(M+b^\womega\right).
\end{align}
On the other hand we obtain as lower bound
\begin{align}\label{construction:length:lowerbound}
\lvert \mathbf{p}_{b,\womega,M}\rvert
\geq\sum_{i=1}^{b^\womega}\lc M\mu(\mathbf{p}_i)\rc \womega
\geq M\womega\sum_{i=1}^{b^\womega}\mu(\mathbf{p}_i)=M\womega.
\end{align}

Now we provide upper and lower bounds for the number of
occurrences of a word $\mathbf{b}$ of length $k$ in $\mathbf{p}_{b,\womega,M}$.

For the lower bound we only count the possible occurrences within a
$\mathbf{p}_i$. If there is an occurrence then we can write $\mathbf{p}_i$ as $\mathbf{c}_1\mathbf{b}\mathbf{c}_2$ with
possible empty $\mathbf{c}_1$ or $\mathbf{c}_2$. Since the word
$\mathbf{b}$ is fixed and all possible words of length $\womega$ occur
in $\mathbf{p}_{b,\womega,M}$, we let
$\mathbf{c}_1$ and $\mathbf{c}_2$ vary over all possible words. Thus
\begin{equation}\label{construction:counting:lowerbound}
\begin{split}
N(\mathbf{b},\mathbf{p}_{b,\womega,M})&\geq \sum_{m=0}^{\womega-k}\sum_{\lvert \mathbf{c}_1\rvert=m}\sum_{\lvert
  \mathbf{c}_2\rvert=\womega-k-m}\lc M\mu(\mathbf{c}_1\mathbf{b}\mathbf{c}_2)\rc\\
&\geq M\sum_{m=0}^{\womega-k}\sum_{\lvert \mathbf{c}_1\rvert=m}\sum_{\lvert
  \mathbf{c}_2\rvert=\womega-k-m}\mu(\mathbf{c}_1\mathbf{b}\mathbf{c}_2)\\
&=M\sum_{m=0}^{\womega-k}\sum_{\lvert \mathbf{c}_1\rvert=m}\sum_{\lvert
  \mathbf{c}_2\rvert=\womega-k-m-1}\sum_{d=0}^{b-1}\mu(\mathbf{c}_1\mathbf{b}\mathbf{c}_2d)\\
&=\cdots=M\sum_{m=0}^{\womega-k}\mu(\mathbf{b})=(\womega-k+1)M\mu(\mathbf{b}),
\end{split}
\end{equation}
where we have used the shift invariance of $\mu$, \textit{i.e.} $\sum_{d=0}^{b-1}\mu(d\mathbf{a})=\sum_{d=0}^{b-1}\mu(\mathbf{a}d)=\mu(\mathbf{a})$.

For the upper bound we have to consider
  several different possibilities: The word $\mathbf{b}$ can occour 
  \begin{enumerate}
  \item within $\mathbf{p}_i$,
  \item between two similar words $\mathbf{p}_i\odot \mathbf{p}_i$ or
  \item between two different words $\mathbf{p}_i\odot \mathbf{p}_{i+1}$.
  \end{enumerate}
  
  If the word $\mathbf{b}$ is completely within $\mathbf{p}_i$, then
  we again have that $\mathbf{p}_i=\mathbf{c}_1\mathbf{b}\mathbf{c}_2$
  with possible empty $\mathbf{c}_1$ or $\mathbf{c}_2$. By using
  similar means as above we get that
  \begin{align*}
  \sum_{\mathbf{c}_1,\mathbf{c}_2}\lc M\mu(\mathbf{c}_1\mathbf{b}\mathbf{c}_2)\rc
  \leq \sum_{\mathbf{c}_1,\mathbf{c}_2}\left(M \mu(\mathbf{c}_1\mathbf{b}\mathbf{c}_2)+1\right)
  =\cdots=(\womega-k+1)\left(M\mu(\mathbf{b})+b^{\womega-k}\right),
  \end{align*}

  Now we turn our attention to the number of occurrences between two
  consecutive words. First we assume that these words are equal. Let
  $n=\lvert\mathbf{p}_i\odot\mathbf{p}_i\rvert$ be the length of
  the resulting word. Then
  $\mathbf{p}_i\odot\mathbf{p}_i=\mathbf{c}_1\mathbf{b}\mathbf{c}_2$
  with $\womega-k+1\leq\lvert
  \mathbf{c}_1\rvert\leq n-\womega+k-1$. Thus similar to above we
  get that there are
  \begin{align*}
&\sum_{m=\womega-k+1}^{ n-\womega+k-1}\sum_{\lvert \mathbf{c}_1\rvert=m}
  \sum_{\lvert \mathbf{c}_2\rvert= n-k-m}\lc M\mu(\mathbf{c}_1\mathbf{b}\mathbf{c}_2)\rc\\
&\quad\leq  M\sum_{m=\womega-k+1}^{ n-\womega+k-1}\sum_{\lvert \mathbf{c}_1\rvert=m}
  \sum_{\lvert \mathbf{c}'_2\rvert= n-k-m-1}\sum_{d=0}^{b-1}\mu(\mathbf{c}_1\mathbf{b}\mathbf{c}'_2d)
  +\sum_{m=\womega-k+1}^{ n-\womega+k-1}b^{ n-k}\\
&\quad=\cdots=M\sum_{m=\womega-k+1}^{ n-\womega+k-1}\mu(\mathbf{b})+( n-2\womega+k-1)b^{ n-k}\\
&\quad=( n-2\womega+k-1)\left(M\mu(\mathbf{b})+b^{ n-k}\right)\\
&\quad\leq(j+k-1)\left(M\mu(\mathbf{b})+b^{2\womega+j-k}\right)
  \end{align*}
occurrences between two identical words.

Finally, we trivially estimate the number of occurrences between two
different words by their total amount, which is
$\leq(j+k-1)b^\womega$.

Combining these three bounds and using $k\leq\womega$ we get as upper
bound for the number of occurrences
\begin{equation}
\label{construction:counting:upperbound}
\begin{split}
&N(\mathbf{b},\mathbf{p}_{b,\womega,M})\\
&\quad\leq(\womega-k+1)\left(M\mu(\mathbf{b})+b^{\womega-k}\right)
+(j+k-1)\left(M\mu(\mathbf{b})+b^{2\womega+j-k}\right)
+(j+k-1)b^\womega\\
&\quad\leq(\womega+j)\left(M\mu(\mathbf{b})+b^{2\womega+j-k}\right).
\end{split}
\end{equation}

Now we calculate $\varepsilon$ such that \refeq{p:epsi:k} holds. Using
our lower bound for the number of occurrences in
(\ref{construction:counting:lowerbound}) together with our upper bound
for the length in (\ref{construction:length:upperbound}) we get that
\begin{align*}
\frac{N(\mathbf{b},\mathbf{p}_{b,\womega,M})}{\lvert \mathbf{p}_{b,\womega,M}\rvert}
\geq\frac{(\womega-k+1)M\mu(\mathbf{b})}{(\womega+j)\left(M+b^{\womega}\right)}
\geq\mu(\mathbf{b})\left(1-\frac{j+k-1}{\womega+j}\right)\left(1-\frac{b^\womega}{M+b^{\womega}}\right)
\end{align*}
which implies for $\varepsilon$ the upper bound
\[
\varepsilon\leq\frac{j+k-1}{\womega+j}+\frac{b^\womega}{M+b^{\womega}}.
\]

On the other side an application of the upper bound for the number of
occurrences in (\ref{construction:counting:upperbound}) together with
the lower bound for the length in
(\ref{construction:length:lowerbound}) yields
\begin{align*}
\frac{N(\mathbf{b},\mathbf{p}_{b,\womega,M})}{\lvert \mathbf{p}_{b,\womega,M}\rvert}
\leq\mu(\mathbf{b})\left(1+\frac j\womega\right)\left(1+\frac1{m_k}\frac{b^{2\womega+j-k}}{M}\right).
\end{align*}

Putting these together we get that $\mathbf{p}_{b,\womega,M}$ is $(\varepsilon,
k,\mu)$-normal for 
\begin{equation}\label{construction:eps-k-mu:normality}
k\leq\womega\quad\text{and}
\quad
\varepsilon\leq\max
\left(\frac{j+k-1}{\womega+j}+\frac{b^\womega}{M+b^{\womega}},
\frac j\womega+\frac1{m_k}\frac{b^{2\womega+j-k}}{M}\right).
\end{equation}

\section{Proof of \refmt{1}}\label{sec:proof_of_main_theorem}

\begin{rem}
  In our proof we will use a classical counting argument. We could use
  a variant of the ``hot spot lemma'' (\textit{cf.} Moshchevitin and
  Shkredov
  \cite{moshchevitin_shkredov2003:pyatetskii_shapiro_criterion} and
  Shkredov \cite{shkredov2010:pyatetskii_shapiro_normality}). However,
  on the one hand, since their results are for the full shift over
  finite and infinite alphabets, we need to develop a variant of the
  ``hot spot lemma'' for dynamical systems satisfying the
  specification property. On the other hand, since our proof follows
  along similar lines to the proof of Main Theorem 1.15 in
  \cite{mance2011:construction_normal_numbers}, we will only include
  those parts that differ significantly and omit the proofs, which are
  similar to proofs of lemmas in
  \cite{mance2011:construction_normal_numbers}.
\end{rem}

Throughout this section, we will fix a sequence
$W=((\mathbf{w}_i,\ell_i))_{i=1}^{\infty}$ that is $\mu$-good for the
approximation scheme $(\nu_i)_{i\geq1}$. Suppose that every
$\mathbf{w}_i$ is $(\epsilon_i,k_i,\nu_i)$-normal. Then we define the
set of supported lengths $R(W)=[1,\limsup_{i\to\infty}k_i]\cap\N$.

Set
$\omega=\mathbf{w}_1^{\odot \ell_1}\odot\mathbf{w}_2^{\odot
  \ell_2}\odot\cdots$ to be the constructed infinite word and denote
by $\sigma_k$ the $k$th block, \textit{i.e.}
\[\sigma_k=\mathbf{w}_k^{\odot
  \ell_k}\mathbf{u}_{\mathbf{w}_k,\mathbf{w}_{k+1}}\] be the $k$th
block. Let $L_i$ be the length of the concatenation up to the $i$th
block, \textit{i.e.}
\[L_i=\sum_{k=1}^i\lvert\sigma_k\rvert=\sum_{k=1}^i\left(\ell_k|\mathbf{w}_k|+(\ell_{k}-1)|\mathbf{u}_{\mathbf{w}_k,\mathbf{w}_k}|+|\mathbf{u}_{\mathbf{w}_k,\mathbf{w}_{k+1}}|\right).\]
For a given $n$, the letter
$i=i(n)$ will always be understood to be the positive integer that
satisfies $L_{i} < n \leq L_{i+1}$, \textit{i.e.} position $n$ lies in
the block $\sigma_{i+1}$. Let $m=n-L_i$, then we consider
$\sigma_{i+1}\vert m$. Let $x$ be the largest integer such that
\[\sigma_{i+1}\vert
m=(\mathbf{w}_{i+1}\mathbf{u}_{\mathbf{w}_{i+1},\mathbf{w}_{i+1}})^x\mathbf{v}\]
Then $m$ can be written in the form
$$
m=x(\lxip+\lvert\mathbf{u}_{\mathbf{w}_{i+1},\mathbf{w}_{i+1}}\rvert)+y
$$
with $y=\lvert\mathbf{v}\rvert$. We have that $x$ and $y$ satisfy
\[0 \leq x<\lip \textrm{ and } 0 \leq y <  \lxip+j,\]
where $j$ is the bound from the specification property.

Thus, we can write the first $n$ digits of $\omega$ as concatenation
of the complete blocks $\sigma_1,\ldots,\sigma_i$, the $x$ repetitions
of the word $\mathbf{w}_{i+1}$ and the rest $\mathbf{v}$,
\textit{i.e.}
\[
\omega\vert_n=\mathbf{w}_1^{\odot \ell_1}\odot\mathbf{w}_2^{\odot 
  \ell_2}\odot\cdots\odot\mathbf{w}_{i-1}^{\odot \ell_{i-1}}\odot\mathbf{w}_i^{\odot
  \ell_i}\odot\mathbf{w}_{i+1}^{\odot x}\odot \mathbf{v}.
\]

For a word $\mathbf{b}$, let
\begin{gather}\label{phi_n:of:b}
\phi_n(\mathbf{b})=\sum_{k=1}^i \lvert\sigma_k\rvert\nu_k(\mathbf{b}) + m \nuipb.
\end{gather}
Since $(\mathbf{w}_i,\ell_i)_{i=1}^\infty$ is $\mu$-good, we have that
$\lim_{n \to \infty} \frac {\phi_n(\mathbf{b})} {n} =
\mu(\mathbf{b})$. Therefore $\omega$ is $\mu$-normal if and only if
\begin{gather}\labeq{bnd}
\lim_{n \to \infty} \frac {N_n(\mathbf{b},\omega)} {\phi_n(\mathbf{b})}=1
\end{gather}
for all words $\mathbf{b}\in\admu$.  

For a given word $\mathbf{b}$ of supported length $k\in R(W)$, the
following lemma, which is proved identically to Lemma 2.1 and Lemma
2.2 in \cite{mance2011:construction_normal_numbers}, provides us with
upper and lower bounds for $N_n(\mathbf{b},\omega)$.
\begin{lem}\labl{l2.2}
If $k \leq k_i$ and $\mathbf{b} \in \mathscr{D}_{\nu_i,k}$, then
\begin{align*}
N_n(\mathbf{b},\omega)&\leq
L_{i-1}+(1+\epsilon_i){\nuib} \ell_i  \lxi +(k+j)(\ell_i+1)+((1+\epsilon_{i+1}){\nuipb}  \lxip +k+j)x+y\\
\intertext{and}
N_n(\mathbf{b},\omega)&\geq(1-\epsilon_i){\nuib} \ell_i  \lxi +(1-\epsilon_{i+1}){\nuipb}  \lxip. 
\end{align*}
\end{lem}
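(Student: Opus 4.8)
The plan is to count occurrences of $\mathbf{b}$ in
$$\omega\vert_n=\omega_1^{\odot l_1}\odot\cdots\odot\omega_i^{\odot
  l_i}\odot\omega_{i+1}^{\odot x}\odot\gamma$$
by partitioning the positions $0\le h<n$ according to which structural piece the occurrence $a_{h+1}\cdots a_{h+k}$ begins in. First I would separate three kinds of positions: those lying within the already-completed blocks $\sigma_1,\ldots,\sigma_{i-1}$, those lying in the current block $\sigma_i=\omega_i^{\odot l_i}\mathbf{u}_{\omega_i,\omega_{i+1}}$, and those lying in the trailing partial block $\omega_{i+1}^{\odot x}\odot\gamma$. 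The crude upper bound for the first group is simply $L_{i-1}$, since at most one occurrence can start at each of those positions; this is where the leading $L_{i-1}$ term comes from.

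For the bulk contribution I would handle the $l_i$ copies of $\omega_i$ and the $x$ copies of $\omega_{i+1}$ using the $(\epsilon_i,k_i,\nu_i)$-normality of $\omega_i$. Within a single copy of $\omega_i$, since $k\le k_i$ and $\mathbf{b}\in\mathscr{D}_{\nu_i,k}$, the defining inequality of $(\epsilon_i,k_i,\nu_i)$-normality gives $N(\mathbf{b},\omega_i)\le(1+\epsilon_i)\nuib\lxi$ and $\ge(1-\epsilon_i)\nuib\lxi$. Summing over the $l_i$ copies yields the main terms $(1\pm\epsilon_i)\nuib l_i\lxi$. The technical point is that an occurrence of $\mathbf{b}$ can \emph{straddle} the boundary between two consecutive structural pieces — between two copies of $\omega_i$, across one of the connecting words $\mathbf{u}$, or between $\omega_i$ and $\omega_{i+1}$. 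Each boundary region where a straddling occurrence can start has width at most $k+j$ (the block-length $k$ minus one, plus the length $\le j$ of an inserted connector), and there are at most $l_i+1$ such internal boundaries among the $\omega_i$-copies; this accounts for the $(k+j)(l_i+1)$ correction in the upper bound. Analogously the $x$ copies of $\omega_{i+1}$ contribute $(1\pm\epsilon_{i+1})\nuipb\lxip$ per copy, with a per-copy boundary slack of $k+j$, giving the $((1+\epsilon_{i+1})\nuipb\lxip+k+j)x$ term, and the trailing fragment $\gamma$ of length $y$ contributes at most $y$ occurrences.

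For the lower bound I would be deliberately wasteful: I discard the contribution of the completed blocks $\sigma_1,\ldots,\sigma_{i-1}$ entirely, ignore all boundary-straddling occurrences (which only increase the count), and keep only the interior occurrences guaranteed inside the $l_i$ copies of $\omega_i$ and the $x$ copies of $\omega_{i+1}$. Applying the lower half of the $(\epsilon,k,\nu)$-normality inequality to each copy and summing gives precisely $(1-\epsilon_i)\nuib l_i\lxi+(1-\epsilon_{i+1})\nuipb\, a\,\lxip$, where $a$ denotes the number of full copies of $\omega_{i+1}$ that can be counted cleanly (essentially $x$, possibly shifted by one to avoid the final straddling region).

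The main obstacle is the careful bookkeeping of the straddling occurrences near the piece boundaries, since the word $\mathbf{b}$ may overlap two adjacent copies or an inserted connector $\mathbf{u}$ whose content is not controlled by any $\nu_i$-normality hypothesis. The key observation that tames this is that each such ambiguous starting position is confined to a window of width at most $k+j$ immediately preceding a boundary, so the total overcount is bounded by (number of boundaries)$\times(k+j)$; since the counts of these boundaries ($l_i+1$ internally, plus $x$ for the $\omega_{i+1}$-copies) are negligible relative to the main terms under conditions \refeq{good1}--\refeq{good3}, they do not affect the limit \refeq{bnd}. As stated in the excerpt, the detailed verification is identical to that of Lemma 2.1 and Lemma 2.2 in \cite{mance2011:construction_normal_numbers}, so I would carry out the boundary analysis once for the $\omega_i$-copies and then transcribe it verbatim for the $\omega_{i+1}$-copies.
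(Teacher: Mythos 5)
Your decomposition into completed blocks (crude bound $L_{i-1}$), interior occurrences in the $l_i$ copies of $\omega_i$ and the $x$ copies of $\omega_{i+1}$ (controlled by $(\epsilon_i,k_i,\nu_i)$- resp.\ $(\epsilon_{i+1},k_{i+1},\nu_{i+1})$-normality), boundary windows of width at most $k+j$, and the trailing fragment $y$ is exactly the counting argument of Lemma~2.1 and Lemma~2.2 of \cite{mance2011:construction_normal_numbers} to which the paper defers, and your reading of $a$ as the number of cleanly counted full copies of $\omega_{i+1}$ (essentially $x$) is the intended one. The proposal is correct and takes essentially the same approach as the paper.
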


Now we estimate $\frac{N_n(\mathbf{b},\omega)}{\phi_n(\mathbf{b})}-1$
from above and below. On the one hand using the upper bound for
$N_n(\mathbf{b},\omega)$ in \refl{l2.2} and the definition of
$\phi_n(\mathbf{b})$ in (\ref{phi_n:of:b}) yields
\begin{multline*}
\frac{N_n(\mathbf{b},\omega)}{\phi_n(\mathbf{b})}-1\\
\leq\frac{L_{i-1}+(\epsilon_i {\nuib} \lxi
    +(k+j))\ell_i +(\epsilon_{i+1}{\nuipb} \lxip +(k+j)) x+y}
  {\phi_{L_i}(\mathbf{b})+{\nuipb}\left(\lvert\mathbf{w}_{i+1}\mathbf{u}_{\mathbf{w}_{i+1},\mathbf{w}_{i+1}}\rvert
      x+y\right)}
=:  g_{i,\mathbf{b}}(x,y)
\end{multline*}
On the other hand we combine the lower bound for
$N_n(\mathbf{b},\omega)$ in \refl{l2.2} and the definition of
$\phi_n(\mathbf{b})$ in (\ref{phi_n:of:b}) gives
\begin{multline*}
\frac{N_n(\mathbf{b},\omega)}{\phi_n(\mathbf{b})}-1\\
\geq-\frac{\phi_{L_{i-1}}(\mathbf{b})+\epsilon_i
    {\nuib} \ell_i \lxi +\nuipb\left(\epsilon_{i+1}\lxip
      +\lvert\mathbf{u}_{\mathbf{w}_{i+1},\mathbf{w}_{i+1}}\rvert\right) x
    +{\nuipb} y}
  {\phi_{L_i}(\mathbf{b})
  +{\nuipb}\left(\lvert\mathbf{w}_{i+1}\mathbf{u}_{\mathbf{w}_{i+1},\mathbf{w}_{i+1}}\rvert x+y\right)}=:- f_{i,\mathbf{b}}(x,y)
\end{multline*}

Therefore
\[
\left| \frac {N_n(\mathbf{b},\omega)} {\phi_n(\mathbf{b})} - 1 \right|
<\max \left( f_{i,\mathbf{b}} (x,y),g_{i,\mathbf{b}} (x,y) \right).
\]
However, since the numerator of $g_{i,\mathbf{b}} (x,y)$ is clearly
greater than the numerator of $f_{i,\mathbf{b}} (x,y)$ and their
denominators are the same we deduce the following
\begin{lem}\labl{l2.4}
For any $i$ let $k \in R(W)$, $k \leq k_i$, and $\mathbf{b} \in \mathscr{D}_{\nu_i,k}$.  Then
\begin{equation}
\left| \frac {N_n(\mathbf{b},\omega)} {\phi_n(\mathbf{b})} -1 \right| < g_{i,\mathbf{b}} (x,y).
\end{equation}
\end{lem}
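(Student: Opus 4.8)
The plan is to prove Lemma~\ref{lemma:l2.4} by combining the two one-sided bounds from Lemma~\ref{lemma:l2.2} with the auxiliary rational functions $f_{i,\mathbf{b}}$ and $g_{i,\mathbf{b}}$. The quantity to be controlled is
$$
\left| \frac {N_n(\mathbf{b},\omega)} {\phi_n(\mathbf{b})} -1 \right|
= \frac{\left| N_n(\mathbf{b},\omega) - \phi_n(\mathbf{b})\right|}{\phi_n(\mathbf{b})},
$$
so I would first split into the cases $N_n(\mathbf{b},\omega)\geq \phi_n(\mathbf{b})$ and $N_n(\mathbf{b},\omega)<\phi_n(\mathbf{b})$ and bound the numerator above in each case using the respective inequality from Lemma~\ref{lemma:l2.2}. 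The denominator $\phi_n(\mathbf{b})$ will be rewritten using its definition together with the decomposition $m=x(\lxip+\lvert\mathbf{u}_{\omega_{i+1},\omega_{i+1}}\rvert)+y$, so that
$$
\phi_n(\mathbf{b})=\phi_{L_i}(\mathbf{b})+m\,\nuipb
=\phi_{L_i}(\mathbf{b})+\nuipb\!\left(\lvert\omega_{i+1}\mathbf{u}_{\omega_{i+1},\omega_{i+1}}\rvert x+y\right),
$$
which is exactly the common denominator appearing in both $f_{i,\mathbf{b}}$ and $g_{i,\mathbf{b}}$.

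Next I would match the numerators. For the upper bound $N_n(\mathbf{b},\omega)\leq \cdots$ from Lemma~\ref{lemma:l2.2}, I would subtract $\phi_n(\mathbf{b})$ and regroup the terms by powers of the principal quantities $l_i\lxi$, $x$, and $y$; the leading main terms $\nuib\,l_i\lxi$ and $\nuipb\,x\lxip$ cancel against the corresponding pieces of $\phi_n(\mathbf{b})$, leaving precisely the error terms $\epsilon_i\nuib\,l_i\lxi$, the constants $(k+j)(l_i+1)$, the $x$-coefficient $\epsilon_{i+1}\nuipb\lxip+(k+j)$, and the $y$ term, together with the $L_{i-1}$ carried from the initial blocks. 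Collecting these over the common denominator yields exactly $g_{i,\mathbf{b}}(x,y)$, so $N_n-\phi_n$ divided by $\phi_n$ is at most $g_{i,\mathbf{b}}(x,y)$. For the lower bound I would do the analogous computation and check that $\phi_n-N_n$ over $\phi_n$ is bounded above by $f_{i,\mathbf{b}}(x,y)$; since the hypotheses of the lemma guarantee $f_{i,\mathbf{b}}(x,y)\leq g_{i,\mathbf{b}}(x,y)$ (the numerator of $g$ dominates that of $f$ term by term once the $(k+j)$ corrections are present), both sides of the absolute value are bounded by $g_{i,\mathbf{b}}(x,y)$, giving the claim. Note the statement as printed evaluates at $(a,b)$; I would read these as the admissible-range values $x=x(n)$, $y=y(n)$ determined above.

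\emph{The main obstacle} I anticipate is the bookkeeping in the upper-bound regrouping: one must verify that every positive error contribution in the difference $N_n(\mathbf{b},\omega)-\phi_n(\mathbf{b})$ is accounted for in the numerator of $g_{i,\mathbf{b}}$ and that no main term is lost in the cancellation, paying particular attention to the $l_i$ versus $l_i+1$ discrepancy in the constant block and to the role of the splicing words $\mathbf{u}_{\omega_{i+1},\omega_{i+1}}$ in both $m$ and the length $M_{i+1}$. Once the two estimates are aligned over the shared denominator, the inequality $f_{i,\mathbf{b}}\leq g_{i,\mathbf{b}}$ is a routine term-by-term comparison and the result follows. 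As indicated in the surrounding text, the details here parallel the corresponding lemmas in \cite{mance2011:construction_normal_numbers}, so I would present the regrouping explicitly and refer there for the verification that the frequencies $\nu_i(\mathbf{b})$ behave correctly on the spliced words.
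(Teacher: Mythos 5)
Your proposal matches the paper's proof: both derive $-f_{i,\mathbf{b}}(x,y)\leq N_n(\mathbf{b},\omega)/\phi_n(\mathbf{b})-1\leq g_{i,\mathbf{b}}(x,y)$ from Lemma~\ref{lemma:l2.2}, observe that $f_{i,\mathbf{b}}$ and $g_{i,\mathbf{b}}$ share a denominator while the numerator of $g_{i,\mathbf{b}}$ dominates, and conclude that the absolute value is bounded by $g_{i,\mathbf{b}}(x,y)$ (with $(a,b)$ in the statement indeed a typo for $(x,y)$). Your explicit attention to the regrouping and the $l_i$ versus $l_i+1$ bookkeeping is, if anything, slightly more careful than the paper's one-line derivation.
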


We are looking for a good bound for $g_{i,\mathbf{b}}(x,y)$ where
$(x,y)$ ranges over values in $\{0,1,\ldots,\ell_{i+1} \} \times
\{0,1,\ldots, \lxip -1 \}$. 

 
\begin{lem}\labl{l2.5}
  If $k \in R(W)$, $\ei<1/2$, $\ell_i>0$, $\mathbf{b} \in
  \mathscr{D}_{\nu_i,k}$,
  \begin{equation*}
\lxi >2 \cdot (k+j) +2\frac {L_{i-1}\nuipb-\phi_{L_{i-1}}} {\ell_i \nuib },\quad  \lxip >\frac
  {k+j } {\nuipb(\epsilon_i-\epsilon_{i+1})},
\end{equation*}
  and
\begin{equation}
(x,y) \in \{0,1,\ldots,\ell_{i+1} \} \times \{0,1,\ldots, \lxip +j-1 \},
\end{equation}
then
\begin{equation}
  g_{i,\mathbf{b}}(x,y) < g_{i,\mathbf{b}} (0, \lxip+j)=\frac {(L_{i-1}+\epsilon_i {\nuib}\ell_i \lxi +(k+j)\ell_i) + \lxip +j} {\phi_{L_i}(\mathbf{b})+{\nuipb}\left(\lxip+j\right)}.
\end{equation}
\end{lem}

\begin{proof} 
We note that $g_{i,\mathbf{b}} (x,y)$ is a rational function of $x$ and $y$ of the form
$$
g_{i,\mathbf{b}} (x,y)=\frac {C+Dx+Ey} {F+Gx+Hy}
$$
where
\begin{align*}
C&=L_{i-1}+\epsilon_i {\nuib} \ell_i  \lxi +(k+j)\ell_i, &
D&=\epsilon_{i+1}{\nuipb}  \lxip +(k+j), & 
E&=1,\\
F&=\phi_{L_i}(\mathbf{b}), & 
G&={\nuipb}\lvert\mathbf{w}_{i+1}\mathbf{u}_{\mathbf{w}_{i+1},\mathbf{w}_{i+1}}\rvert,\text{ and} & 
H&={\nuipb}.
\end{align*}
We will show that if we fix $y$, then $g_{i,\mathbf{b}}(x,y)$ is a decreasing function
of $x$ and if we fix $x$, then $g_{i,\mathbf{b}}(x,y)$ is an increasing function of $y$.
To see this, we compute the partial derivatives: 
\begin{equation}\label{partial:derivatives:labeling}\begin{split}
&\frac {\partial g_{i,\mathbf{b}}} {\partial x} (x,y)=\frac {D(F+Gx+Hy)-G(C+Dx+Ey)} {(F+Gx+Hy)^2}=\frac {D(F+Hy)-G(C+Ey)} {(F+Gx+Hy)^2};\\
&\frac {\partial g_{i,\mathbf{b}}} {\partial y} (x,y)=\frac {E(F+Gx+Hy)-H(C+Dx+Ey)} {(F+Gx+Hy)^2}=\frac {E(F+Gx)-H(C+Dx)} {(F+Gx+Hy)^2}.
\end{split}\end{equation}
Thus, the sign of $\frac {\partial g_{i,\mathbf{b}}} {\partial x}
(x,y)$ does not depend on $x$ and the sign of $\frac {\partial
  g_{i,\mathbf{b}}} {\partial y} (x,y)$ does not depend on $y$.  We
will first show that $g_{i,\mathbf{b}} (x,y)$ is an increasing
function of $y$ by verifying that
\begin{equation}\labeq{251}
E(F+Gx)>H(C+Dx).
\end{equation}
Let $\phi_i^*(\mathbf{b})=H\,C={\nuipb}\left(L_{i-1}+\epsilon_i
  {\nuib} \ell_i \lxi +(k+j)\ell_i\right)$. Then Equation \refeq{251} can be
written as
\begin{equation}\labeq{254}
\phi_{L_i}(\mathbf{b}) +  \Bigg[ {\nuipb} \lxip x \Bigg] > \phi_i^*(\mathbf{b}) + \Bigg[ {\nuipb}(\epsilon_{i+1} {\nuipb} \lxip  + (k+j) )x \Bigg].
\end{equation}
We will verify this inequality in two steps by showing
\begin{align*}
\phi_{L_i}(\mathbf{b})> \phi_i^*(\mathbf{b}) \quad\text{and}\quad
\nuipb \lxip x > \nuipb(\epsilon_{i+1} {\nuipb} \lxip  + (k+j) )x.
\end{align*}

In order to show that $\phi_{L_i}(\mathbf{b}) > \phi_i^*(\mathbf{b})$,
we first note that
\[
 \phi_{L_i}(\mathbf{b})
=\phi_{L_{i-1}}(\mathbf{b})+{\nuib}\left((\ell_i-1)\lvert\mathbf{w}_i\mathbf{u}_{\mathbf{w}_i,\mathbf{w}_i}\rvert+\lvert\mathbf{w}_i\mathbf{u}_{\mathbf{w}_i,\mathbf{w}_{i+1}}\rvert\right)
\geq\phi_{L_{i-1}}(\mathbf{b})+{\nuib}\ell_i\lxi.
\]
Thus we need to show that
\begin{equation}\labeq{252} 
\phi_{L_{i-1}}+\nuib\ell_i  \lxi  > {\nuipb} (\epsilon_i {\nuib}\ell_i  \lxi +(k+j)\ell_i)+L_{i-1}\nuipb.
\end{equation}
However, by rearranging terms, \refeq{252} is equivalent to
\begin{equation}\labeq{253} 
 \lxi  > \frac {\nuipb} {\nuib}  \cdot \frac {k+j} {1-{\nuipb}\epsilon_i} +\frac {L_{i-1}\nuipb-\phi_{L_{i-1}}} {\ell_i \nuib (1-\nuipb \ei)}.
\end{equation}
Since $\epsilon_i < 1/2$, we know that $(1-{\nuipb}\epsilon_i)^{-1} <
2$.  Additionally, since $\nuib$ is eventually non-increasing we have
for sufficiently large $i$ that $\nuipb \leq \nuib$. Therefore,
\begin{equation*}
\frac {\nuipb} {\nuib} \cdot \frac {k+j} {1-{\nuipb}\epsilon_i} +\frac
{L_{i-1}\nuipb-\phi_{L_{i-1}}} {\ell_i \nuib (1-\nuipb \ei)}
< 2 \cdot (k+j) +2\frac {L_{i-1}\nuipb-\phi_{L_{i-1}}} {\ell_i \nuib }.
\end{equation*}
But we supposed $ \lxi >2 \cdot (k+j) +2\frac
{L_{i-1}\nuipb-\phi_{L_{i-1}}} {\ell_i \nuib }.$. So \refeq{253} is
satisfied and thus $\phi_{L_i}(\mathbf{b}) > \phi_i^*(\mathbf{b})$.

The last step, for verifying \refeq{254}, is to show that
$$
{\nuipb} \lxip x \geq {\nuipb}(\epsilon_{i+1} {\nuipb} \lxip +(k+j)) x.
$$
However, this is equivalent to
\begin{equation}\labeq{255} 
 \lxip x \geq (\epsilon_{i+1} {\nuipb} \lxip +(k+j)) x.
\end{equation}
Clearly, \refeq{255} is true if $x=0$.  If $x>0$ we can rewrite \refeq{255} as
$$
 \lxip  \geq \frac {1} {1-{\nuipb}\epsilon_{i+1}} \cdot (k+j).
$$
Similar to \refeq{253}, $(1-{\nuipb}\epsilon_{i+1})^{-1} (k+j) \leq 2 (k+j) <  \lxi  \leq  \lxip $.  Thus \refeq{251} is satisfied and $g_{i,\mathbf{b}}(x,y)$ is an increasing function of $y$.

It will be more difficult to show that $\frac {\partial
  g_{i,\mathbf{b}}} {\partial x} (x,y)<0$ in a similar manner so we
proceed as follows: because the sign of $\frac {\partial
  g_{i,\mathbf{b}}} {\partial x} (x,y)$ does not depend on $x$, we
will know that $g_{i,\mathbf{b}} (x,y)$ is decreasing in $x$ if for
each $y$
$$
\lim_{x \rightarrow \infty} g_{i,\mathbf{b}} (x,y)<g_{i,\mathbf{b}}(0,y).
$$
Since $g_{i,\mathbf{b}}(x,y)$ is an increasing function of $y$, we
know for all $y$ that $g_{i,\mathbf{b}}(0,0)<g_{i,\mathbf{b}}(0,y)$.
Hence, it is enough to show that
$$
\lim_{x \rightarrow \infty} g_{i,\mathbf{b}} (x,y) < g_{i,\mathbf{b}}(0,0).
$$
Since $\lim_{x \rightarrow \infty} g_{i,\mathbf{b}} (x,y)=D/G$ and
$g_{i,\mathbf{b}}(0,0)=C/F$, it is sufficient to show that
$CG>DF$, where $C$, $D$, $F$ and $G$ are as in \eqref{partial:derivatives:labeling}. Therefore we have
\begin{equation*}
\begin{split}
  &\left( L_{i-1}+\epsilon_i {\nuib} \ell_i  \lxi +(k+j)\ell_i \right) {\nuipb}\lvert\mathbf{w}_{i+1}\mathbf{u}_{\mathbf{w}_{i+1},\mathbf{w}_{i+1}}\rvert   
  > \left( \epsilon_{i+1}{\nuipb}  \lxip +(k+j) \right) \phi_{L_i}(\mathbf{b})\\
&\quad=\left( \epsilon_{i+1}{\nuipb}  \lxip +(k+j) \right) (\phi_{L_{i-1}}(\mathbf{b})+{\nuib}\left((\ell_i-1)\lvert\mathbf{w}_i\mathbf{u}_{\mathbf{w}_i,\mathbf{w}_i}\rvert+\lvert\mathbf{w}_i\mathbf{u}_{\mathbf{w}_i,\mathbf{w}_{i+1}}\rvert\right)\\
\end{split}
\end{equation*}
Since $0\leq\lvert\mathbf{u}_{\mathbf{a},\mathbf{b}}\rvert\leq j$ it
suffices to show that
\begin{multline*}
L_{i-1} {\nuipb}  \lxip  + \epsilon_i {\nuib} {\nuipb} \ell_i  \lxi   \lxip  + (k+j) {\nuipb}\ell_i   \lxip \\
>\left( \epsilon_{i+1}{\nuipb}  \lxip +(k+j) \right) \phi_{L_{i-1}}(\mathbf{b}) + \left( \epsilon_{i+1}{\nuipb}  \lxip +(k+j) \right){\nuib}\ell_i\left(\lxi+j\right) .
\end{multline*}

Similar to above we will verify this in two steps:
\begin{equation}\labeq{mani:1}
\begin{split}
&L_{i-1} {\nuipb}  \lxip >\left( \epsilon_{i+1}{\nuipb}  \lxip +(k+j) \right) \phi_{L_{i-1}}(\mathbf{b}) \textrm{ \ and}\\
&
\epsilon_i {\nuipb}  \lxip  > \epsilon_{i+1}{\nuipb}  \lxip +(k+j),
\end{split}
\end{equation}
Since $L_{i-1}>\phi_{L_{i-1}}(\mathbf{b})$, in order to prove the first inequality of \refeq{mani:1}, it is enough to show that
$$
{\nuipb}  \lxip >\epsilon_{i+1}{\nuipb}  \lxip +(k+j),
$$
which is equivalent to
$$
 \lxip  > \frac {k+j } {\nuipb(1-\epsilon_{i+1}) }.
$$
But $\epsilon_i<1/2$, so 
$$
\frac {k+j } {\nuipb(1-\epsilon_{i+1}) }<\frac {k+j} {\nuipb(\epsilon_i-\epsilon_{i+1}) }< \lxip .
$$

To verify the second inequality of \refeq{mani:1} we note that this is equivalent to  
\[
 \lxip  > \frac {k+j} {\nuipb(\epsilon_i-\epsilon_{i+1})},
\]
which is given in the hypotheses.

So, we may conclude that $g_{i,\mathbf{b}} (x,y)$ is a decreasing
function of $x$ and an increasing function of $y$. Since $x\geq0$ and
$y<\lxip+j$, we achive the given upper bound by setting $x=0$ and
$y=\lxip+j$.
\end{proof}

Set
\[
\epsilon_i'
=g_{i,\mathbf{b}}(x,y) < g_{i,\mathbf{b}} (0, \lxip+j)=\frac {(L_{i-1}+\epsilon_i {\nuib}\ell_i \lxi +(k+j)\ell_i) + \lxip +j} {\phi_{L_i}(\mathbf{b})+{\nuipb}\left(\lxip+j\right)}.
\]
Thus, under the conditions of \refl{l2.4} and \refl{l2.5},
\begin{equation}\labeq{726}
\left| \frac {N_n(\mathbf{b},\omega)} {\phi_n(\mathbf{b})} -1 \right| < \epsilon_i'
\end{equation}

The proof of the following lemma is essentially identicaly to the
combined proofs of Lemma 2.6, Lemma 2.7, and Lemma 2.8 in
\cite{mance2011:construction_normal_numbers} so the proof has been
omitted.

\begin{lem}\labl{l2.8}
If $k\in R(W)$, then $\lim_{i \rightarrow \infty} \epsilon_i'=0.$
\end{lem}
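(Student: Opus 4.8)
The plan is to take the closed form for $\epsilon_i'$ supplied by \refl{l2.5}, split it into a bounded number of elementary ratios, and drive each ratio to $0$ using one of the defining relations \refeq{good1}--\refeq{good3}. Fix an admissible $\mathbf{b}$ of length $k \in R(W)$. Because $\nu_i \to \mu$ and $\mu(\mathbf{b}) > 0$, we have $\nuib \to \mu(\mathbf{b})$, so $\nuib \geq \mu(\mathbf{b})/2 > 0$ for all large $i$; moreover \refeq{good1} forces $\lxi \to \infty$ and hence $l_i\lxi \to \infty$. First I would check that the hypotheses of \refl{l2.5} hold for all large $i$: $\ei < 1/2$ since $\ei \downarrow 0$; the lower bound on $\lxip$ from \refeq{good1} at index $i+1$ (which gives $\lxip(\ei - \eip) \to \infty$, eventually exceeding the bounded quantity $(k+j)/\nuipb$); and $\lxi > 2(k+j) + 2\ewbi/\nuib$ because $\lxi \to \infty$ while $\ewbi/\nuib$ stays controlled. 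Consequently
\[
\epsilon_i' = \frac{L_{i-1} + \epsilon_i{\nuib}l_i\lxi + (k+j)l_i + \lxip + j}{\phi_{L_i}(\mathbf{b}) + \nuipb\left(\lxip + j\right)}
\]
is legitimate for all large $i$.

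The decisive step is to bound the denominator below by the contribution of the single $i$-th block. Using $\phi_{L_i}(\mathbf{b}) \geq \phi_{L_{i-1}}(\mathbf{b}) + {\nuib}l_i\lxi \geq {\nuib}l_i\lxi$ and splitting the numerator, I would obtain
\[
\epsilon_i' \leq \frac{L_{i-1}}{{\nuib}l_i\lxi} + \ei + \frac{k+j}{{\nuib}\lxi} + \frac{\lxip + j}{{\nuib}l_i\lxi}.
\]
Three of these four terms are immediate: the second is $\ei \to 0$; the third tends to $0$ because $\lxi \to \infty$ and $\nuib \to \mu(\mathbf{b}) > 0$; and the fourth tends to $0$ by \refeq{good3}, which states exactly that $\lxip/(l_i\lxi) = o(1)$, the stray $j/(l_i\lxi)$ vanishing since $l_i\lxi \to \infty$.

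The remaining term $L_{i-1}/({\nuib}l_i\lxi)$ is the main obstacle, and it is here that \refeq{good2} enters. Writing $M_k \asymp l_k|\omega_k|$ (the interpolating words contribute a factor at most $1+j$), relation \refeq{good2} becomes $M_{i-1}/M_i = o(i^{-1})$; in particular the successive ratios $M_{k-1}/M_k$ tend to $0$, so a geometric comparison shows the partial sum $L_{i-1} = \sum_{k<i} M_k$ is dominated by its last term, $L_{i-1} = O(M_{i-1})$. Hence $L_{i-1} = O(M_{i-1}) = o(M_i/i) = o(l_i\lxi)$, and since $\nuib \to \mu(\mathbf{b}) > 0$ the first term tends to $0$ as well. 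Summing the four vanishing contributions gives $\lim_{i\to\infty}\epsilon_i' = 0$. The only genuinely delicate points are this domination of $L_{i-1}$ by its final summand $M_{i-1}$ (rather than an accumulation over all earlier blocks), which is precisely what the super-geometric growth encoded in \refeq{good2} buys, together with the routine verification that the boundary quantity $\ewbi$ remains negligible against the growing $\lxi$.
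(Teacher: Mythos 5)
The paper itself omits this proof, deferring to the combined Lemmas 2.6--2.8 of \cite{mance2011:construction_normal_numbers}, and your self-contained argument is correct: after bounding the denominator below by $\nuib l_i\lxi$, the four resulting ratios vanish by $\ei\downarrow 0$, by $\lxi\to\infty$ (forced by \refeq{good1}), by \refeq{good3}, and---for the only delicate term $L_{i-1}/(\nuib l_i\lxi)$---by the geometric domination $L_{i-1}=O(M_{i-1})=o(l_i\lxi)$ that \refeq{good2} supplies. This is essentially the same term-by-term decomposition carried out in the cited lemmas, so nothing needs to be added.
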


\begin{proof}[Proof of \refmt{1}]
Let $\mathbf{b} \in \adkmu$ for $k \in R(W)$.  Since $\frac {1}
{\epsilon_{i-1}-\epsilon_i}=o(\lxi)$, there exists $n$ large enough so that $
\lxi $ and $ \lxip $ satisfy the hypotheses of \refl{l2.5}.

Since $\lim_{n \rightarrow \infty} i(n)=\infty$, we conclude by applying \refl{l2.8} in \refeq{726} that
$$
\lim_{n \rightarrow \infty} \left| \frac {N_n(\mathbf{b},\omega)} {\phi_n(\mathbf{b})} -1 \right|=0
$$
which implies that
$$
\lim_{n \to \infty} \frac {N_n(\mathbf{b},\omega)} {n}=\mu(\mathbf{b}).
$$

On the contrary let $\mathbf{b}\in A^k\setminus\adkmu$. Since
\begin{align*}
1&=\lim_{n\to\infty}\sum_{\mathbf{b}'\in
  A^k}\frac{N_n(\mathbf{b}',\omega)}n\\
&=\sum_{\mathbf{b}'\in \adkmu}
\lim_{n \to \infty} \frac {N_n(\mathbf{b}',\omega)}{n} +
\sum_{\mathbf{b}'\in A^k\setminus\adkmu} \lim_{n \to \infty} \frac
{N_n(\mathbf{b}',\omega)}{n}\\
&=\sum_{\mathbf{b}' \in
  \adkmu}\mu(\mathbf{b}')+ \sum_{\mathbf{b}'\in A^k\setminus\adkmu} \lim_{n \to
  \infty} \frac {N_n(\mathbf{b}',\omega)}{n}\\
&=1+
\sum_{\mathbf{b}'\in A^k\setminus\adkmu} \lim_{n \to \infty} \frac
{N_n(\mathbf{b}',\omega)}{n}
\end{align*}
and $N_n(\mathbf{b}',\omega)\geq0$ we get that
\[\lim_{n \to \infty} \frac {N_n(\mathbf{b},\omega)} {n}=0=\mu(\mathbf{b}).\]

Therefore combining the two limits from above we get for $\mathbf{b}\in A^k$ that
\[\lim_{n \to \infty} \frac {N_n(\mathbf{b},\omega)} {n}=\mu(\mathbf{b}),\] which implies that $\omega \in \Nmuk{\mu}{k}$.
\end{proof}

\section{Applications}\label{sec:examples}
In the following subsections we show different numeration systems in
which our construction provides normal numbers. In particular, we
consider the $q$-ary expansions, L{\"u}roth series expansion,
$\beta$-expansions and continued fraction expansion. We only have
restrictions on the concatenation in the case of $\beta$-expansions;
all other examples are in the full-shift. It is easy to combine our
construction for $\beta$-expansions and continued fractions in order
to get constructions for $\alpha$-continued fractions (\textit{cf.}
Nakada \cite{nakada1981:metrical_theory_class}) or Rosen-continued
fractions \cite{rosen1954:class_continued_fractions}, which have an
infinite digit set with restrictions on the concatenation of
words. For the relation of normal numbers with respect to different
continued fraction expansion, we refer the interested reader to the
paper of Kraaikamp and Nakada
\cite{kraaikamp_nakada2000:normal_numbers_continued}.

The main ingredient in all our constructions is the following lemma
which follows immediately from our construction in Section
\ref{sec:construction} and \refmt{1}.
\begin{lem}\labl{mainlemma}
  Let $\mu$ be a shift-invariant probability measure and let
  $(\nu_i)_{i\geq1}$ be an approximation scheme for $\mu$.
  Suppose that $q_i \geq 2$, $M_i$ and $\ell_i$ are sequences of
  positive integers such that
  \begin{gather}\label{eqn:good4}
  M_i \geq \left(\min\{\mu(\mathbf{b}):\mathbf{b}\in\mathscr{D}_{\nu_i,i}\}\right)^{-1}\quad\text{and}\quad
q_i^{2i}=o(M_i)
  \end{gather}
  and $(\mathbf{p}_{q_i, i,M_i},\ell_i)$ is $\mu$-good for the
  approximation scheme $(\nu_i)_{i\geq1}$. Then the sequence
  $\omega=\mathbf{w}_1^{\odot \ell_1}\odot\mathbf{w}_2^{\odot \ell_2}\odot
  \cdots$ is $\mu$-normal.
\end{lem}

\subsection{Normal in base $q$}
Let $A=\{0,1,\ldots,q-1\}$. In this example we take as language the
full-shift $A^*$ and therefore we do not have any restrictions on the
concatenation, \textit{i.e.} $j=0$. Let
\[
\nu(t)=\left\{ \begin{array}{ll}
\frac {1} {q} & \textrm{if $0 \leq t \leq q-1$}\\
0		& \textrm{if $t\geq q$.}
\end{array} \right.
\]
For every $i \in \mathbb{N}$ and $\mathbf{b}=b_1\ldots b_i$, define
$\nu_i(\mathbf{b})=\prod_{t=1}^i \nu(b_t)$. Clearly for $\mathbf{b}\in
A^*$ we have $\mu(\mathbf{b})=q^{-\lvert\mathbf{b}\rvert}$ and
$\nu_i\rightarrow \mu$.

Let $q_i=q$, $M_i=q^{2i} \log i$, $\ell_i=i^{2i}$, and put
$\mathbf{w}_i=\mathbf{p}_{q,i,M_i}$, so $iq^{2i} \log i \leq \lxi \leq
iq^{2i} \log i+iq^i$. A short computation shows that \refeq{good1},
\refeq{good2}, \refeq{good3}, and \refeq{good4} hold with
$\epsilon_i=1/\sqrt{i}$.  Thus, by \refl{mainlemma}, the number whose
digits of its $q$-ary expansion are formed by
$\omega=\mathbf{w}_1^{\odot \ell_1}\odot\mathbf{w}_2^{\odot
  \ell_2}\odot\cdots$ is normal in base $q$.

\subsection{Arbitrary measures}

Let $A=\mathbb{N} \cup \{0\}$ and let $\mu$ be a shift-invariant measure on $A^{\mathbb{N}}$.  
We first need to define a sequence of measures $(\nu_i)$ that converges weakly to $\mu$.  Consider  a word $\mathbf{b}=b_1\ldots b_i$.
If there is an index $n$ such that $b_n>i$, then let
$\nu_i(\mathbf{b})=0$.  Let $S=\{n : b_n=i\}$.  If
$S=\emptyset$, then let $\nu_i(\mathbf{b})=\mu(\mathbf{b})$.  If $S
\neq \emptyset$, then let
$$
\nu_i(\mathbf{b})=\sum_{\mathbf{b}'} \mu(\mathbf{b}'),
$$
where the sum is over all words $\mathbf{b}'=b'_1\ldots b'_k$ such
that for each index $n$ in $S$, $b'_n \geq i$.
Set $$M_i=\ceil{\max\left(i^{2i} \log i,
    \left(\inf\{\mu(\mathbf{b}):\mathbf{b}\in\mathscr{D}_{\nu_i,i}\}\right)^{-1}\right)},$$
$\mathbf{w}_i=\mathbf{p}_{i,i,M_i}$, $j=0$, $\ell_1=1$, and
$$
\ell_i=\ceil{\log i \cdot \max\left(\frac {M_{i+1}+(i+1)^{i+1}}{M_i},\left( \frac {M_{i-1}+(i-1)^{i-1}} {M_i}\right)\cdot i\ell_{i-1}\right)} \hbox{ for }i > 1.
$$
We note that $iM_i \leq \lxi \leq i(M_i+i^i)$, so
\begin{align*}
\frac {\limo} {\li} \cdot \frac { \lxim} { \lxi } \cdot i 
&\leq \frac {\limo} {\li} \cdot \frac {(i-1)\left(M_{i-1}+(i-1)^{i-1}\right)} {iM_i} \cdot i\\
&< \frac {\limo} {\left( \frac {M_{i-1}+(i-1)^{i-1}} {M_i}\right)\cdot i\ell_{i-1} \cdot \log i}\frac {M_{i-1}+(i-1)^{i-1}} {M_i} \cdot i=\frac {1} {\log i} \to 0
\end{align*}
and
\begin{align*}
\frac {1} {\li} \cdot \frac { \lxip} { \lxi }
&\leq \frac {1} {\li} \cdot \frac {(i+1)\left(M_{i+1}+(i+1)^{i+1}\right)}{iM_i}\\
&\leq \frac {1} {\frac {M_{i+1}+(i+1)^{i+1}}{M_i}\cdot \log i} \cdot \frac {1+1} {1} \cdot \frac {M_{i+1}+(i+1)^{i+1}}{M_i} 
=\frac {2} {\log i} \to 0.
\end{align*}
Therefore, conditions \refeq{good1}, \refeq{good2}, \refeq{good3}, and
\refeq{good4} hold with $\epsilon_i=1/\sqrt{i}$.  Thus, by
\refl{mainlemma}, the infinite word $\omega=\mathbf{w}_1^{\odot
  \ell_1}\odot\mathbf{w}_2^{\odot \ell_2}\odot\cdots \cdots$ is
$\mu$-normal.

\subsection{L\"uroth series expansion}\footnote{This example may be
  modified to construct normal numbers with respect to {\it
    Generalized L\"uroth series expansions} (see
  \cite{dajani_kraaikamp2002:ergodic_theory_numbers} for a
  definition of these expansions.)}
Put
\begin{displaymath}
\nu_i(t)=\left\{ \begin{array}{ll}
0 & \textrm{$t=0, 1$}\\
\frac {1} {t(t-1)} & \textrm{$2 \leq t \leq i+1$}\\
\frac {1} {i+1} 		& \textrm{$t=i+2$}\\ 
0 & t>i+2
\end{array} \right. 
\end{displaymath}
and
\begin{displaymath}
\mu(t)=\left\{ \begin{array}{ll}
0 & \textrm{$i=0, 1$}\\
\frac {1} {t(t-1)} & \textrm{$t \geq 2$}\\
\end{array} \right. 
\end{displaymath}
For $\mathbf{b}=b_1\ldots b_i$, define $\nu_i(\mathbf{b})=\prod_{t=1}^i \nu_i(b_t)$ and $\mu(\mathbf{b})=\prod_{t=1}^i \mu(b_t)$.  
Clearly, $\nu_i \rightarrow \mu$. Next, we let $j=0$, $q_i=i+2$, $M_i=\max(3!^2,i^{2i} \log i)$, $\ell_i=\floor{i^2 \log i}$, and $\mathbf{w}_i=\mathbf{p}_{i+2,i,M_i}$.  Note that for all $i \geq 1$
\begin{equation*}
M_i \geq (i+1)!^2 > \left(\min\{\mu(\mathbf{b}):\mathbf{b}\in\mathscr{D}_{\nu_i,i}\}\right)^{-1}.
\end{equation*}
Since onditions \refeq{good1}, \refeq{good2}, \refeq{good3}, and
\refeq{good4} hold, we deduce by an application of \refl{mainlemma},
that the number whose digits of its L\"uroth series expansion are
formed by $\mathbf{w}_1^{\odot \ell_1}\odot\mathbf{w}_2^{\odot
  \ell_2}\odot\cdots$ is normal with respect to the L\"uroth series
expansion.

This construction has been partially improved (by lowering the number
of repetitions) in a recent paper by Vandehey
\cite{vandehey2013:simpler_normal_number}, who constructed an example
of a normal number for the L\"uroth series expansion.


\subsection{Unfair coin}
We note that already Postnikov and Pyatecki{\u\i}
\cite{postnikov_pyateckii1957:bernoulli_normal_sequences} used the
Champernowne word for such a construction. However, since it is an
easy application of \refl{mainlemma} we state this example here for
completeness.

Let $p \in (0,1), p \neq 1/2$.
Here, we consider measures $\nu_i$ where
\begin{displaymath}
\nu_i(t)=\left\{ \begin{array}{ll}
p & \textrm{if $t=0$}\\
1-p & \textrm{if $t=1$}\\
0		& \textrm{if $t>1$}
\end{array} \right. . 
\end{displaymath}
For $\mathbf{b}=b_1\ldots b_i$, let $\nu_i(\mathbf{b})=\prod_{t=1}^i \nu_i(b_t)$ and $\mu=\nu_1$.  Set 
$$M_i=\left(\frac {1} {\min (p,1-p)}\right)^{2i},$$
$j=0$, $\ell_i=i^{2i}$, and put
$\mathbf{w}_i=\mathbf{p}_{2,i,M_i}$. 
Then $\mathbf{w}_i$ is $(1/\sqrt{i},\sqrt{i},\nu_i)$-normal and using
\refl{mainlemma} we get that $\omega=\mathbf{w}_1^{\odot\ell_1}\odot\mathbf{w}_2^{\odot\ell_2}\odot$ is $\mu$-normal.

\subsection{$\beta$-expansions}
Let $\beta>1$. Then every number $x\in[0,1)$ has a greedy
$\beta$-expansions given by the greedy algorithm (\textit{cf.} R{\'e}nyi
\cite{renyi1957:representations_real_numbers}): set $r_0=x$, and for
$n\geq1$, let $d_n=\lf\beta r_{n-1}\rf$ and $r_n=\{\beta r_{n-1}\}$. Then
\[
x=\sum_{n\geq1}d_n\beta^n,
\]
where the $d_n$ are integer digits in the alphabet
$A_\beta=\{0,1,\ldots,\ceil{\beta}-1\}$. We denote by
$\mathrm{d}(x)=d_1d_2d_3\ldots$ the greedy $\beta$-expansion of $x$.

Let $D_\beta$ denote the set of greedy $\beta$-expansions of numbers
in $[0,1)$. A finite (resp. infinite) word is called
$\beta$-admissible if it is a factor of an element (resp. an element)
of $D_\beta$. Not every number is $\beta$-admissible and the
$\beta$-expansion of $1$ plays a central role in the characterization
of all admissible sequences. In particular, let
$\mathrm{d}_\beta(1)=b_1b_2\ldots$ be the greedy $\beta$-expansion of
$1$. Since the expansion might be finite we define the quasi-greedy
expansion $\mathrm{d}^*_\beta(1)$ by
\[
\mathrm{d}^*_\beta(1)=\begin{cases}
(b_1b_2\ldots b_{t-1}(b_t-1))^{\womega}
&\text{if }\mathrm{d}_\beta(1)=b_1b_2\ldots b_t\text{ is finite},\\
\mathrm{d}_\beta(1) &\text{otherwise.}
\end{cases}
\]

Then Parry \cite{parry1960:eta_expansions_real} could show the following
\begin{lem}
Let $\beta>1$ be a real number, and let $s$ be an infinite sequence of
non-negative integers. The sequence $s$ belongs to $D_\beta$ if and only if for
all $k\geq0$
\[
  T^k(s)<d_\beta^*(1),
\]
where $T$ is the shift.
\end{lem}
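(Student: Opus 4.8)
The statement to prove is Parry's characterization: an infinite sequence $s$ of non-negative integers lies in $D_\beta$ (the set of greedy $\beta$-expansions) if and only if $\sigma^k(s) < d_\beta^*(1)$ for all $k \geq 0$, where $<$ is the lexicographic order.

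The plan is to proceed by establishing the two directions separately, with the technical heart being the exact relationship between the greedy algorithm and the lexicographic order. First I would record the basic \emph{monotonicity lemma}: if $x < y$ in $[0,1)$ then $\mathrm{d}(x) < \mathrm{d}(y)$ lexicographically, and conversely; this follows by induction on the digit position, since the greedy digit $d_n = \lfloor \beta r_{n-1}\rfloor$ is a monotone function of the remainder $r_{n-1}$, and the remainder map is order-preserving on each cylinder. The key auxiliary fact I would isolate is that a sequence $s = s_1 s_2 \cdots$ with $\sum_n s_n \beta^{-n} = x$ arises from the greedy algorithm applied to $x$ precisely when, at every stage, the chosen digit is maximal subject to the tail still representing a number in $[0,1)$; equivalently, for every tail $\sigma^k(s)$ the value $\sum_{n\geq 1} s_{k+n}\beta^{-n}$ must lie in $[0,1)$.

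For the forward direction, suppose $s = \mathrm{d}(x)$ for some $x \in [0,1)$. I would show each shifted tail $\sigma^k(s)$ is itself the greedy expansion of the remainder $r_k \in [0,1)$, which is immediate from the recursive definition of the algorithm. It then suffices to prove that the greedy expansion of \emph{any} $y \in [0,1)$ satisfies $\mathrm{d}(y) < d_\beta^*(1)$. This is where the quasi-greedy expansion enters: $d_\beta^*(1)$ is constructed precisely to be the lexicographic supremum of all greedy expansions of points in $[0,1)$, i.e.\ the limit from below of $\mathrm{d}(y)$ as $y \uparrow 1$. The finite/infinite case distinction in the definition of $d_\beta^*(1)$ handles the situation where $\mathrm{d}_\beta(1)$ terminates, by replacing the terminating expansion with its periodic quasi-greedy counterpart so that strict inequality holds for all $y < 1$.

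For the converse, assume $\sigma^k(s) < d_\beta^*(1)$ for all $k \geq 0$. The strategy is to verify that $s$ must coincide with the greedy expansion of the real number $x := \sum_n s_n\beta^{-n}$. Using the admissibility condition on every tail, I would show inductively that at each step the digit $s_n$ equals $\lfloor \beta r_{n-1}\rfloor$: the condition $\sigma^{n-1}(s) < d_\beta^*(1)$ forces the tail value to stay in $[0,1)$, so $s_n$ cannot exceed the greedy choice, while a smaller choice would leave a remainder $\geq 1$, contradicting that the full tail represents a valid point. \textbf{The main obstacle} is the borderline analysis at equality: controlling what happens when $\sigma^k(s)$ agrees with a long prefix of $d_\beta^*(1)$, since there the comparison between the tail value and $1$ is delicate and requires the self-referential property $\sum_n b_n \beta^{-n} = 1$ (resp.\ the periodicity of $d_\beta^*(1)$) to rule out the remainder reaching or exceeding $1$. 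I expect this boundary case to absorb most of the real work, whereas the monotonicity bookkeeping is routine.
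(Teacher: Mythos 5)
The paper does not prove this lemma at all: it is Parry's theorem, quoted verbatim with a citation to Parry's 1960 paper, so there is no internal argument to compare yours against. Judged on its own, your outline follows the standard route to Parry's characterization: (i) the greedy map $x\mapsto \mathrm{d}(x)$ is strictly increasing for the lexicographic order; (ii) a representation $x=\sum_n s_n\beta^{-n}$ is greedy precisely when every tail value $\sum_{n\ge1}s_{k+n}\beta^{-n}$ lies in $[0,1)$; (iii) $\mathrm{d}_\beta^*(1)$ is the lexicographic supremum of greedy expansions of points of $[0,1)$. This decomposition is correct and is essentially how the result is proved in the literature.

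That said, what you have written is a plan rather than a proof, and the part you defer is not bookkeeping but the entire content of the theorem. In the converse direction, the claim that $\sigma^k(s)<\mathrm{d}_\beta^*(1)$ for all $k$ forces every tail value below $1$ requires an actual argument: writing $d^*=b_1^*b_2^*\cdots$ and letting $m$ be the first index where $\sigma^k(s)$ and $d^*$ differ, one estimates
\[
\sum_{n\ge1}s_{k+n}\beta^{-n}\le\sum_{n<m}b_n^*\beta^{-n}+(b_m^*-1)\beta^{-m}+\beta^{-m}\sum_{n\ge1}s_{k+m+n}\beta^{-n},
\]
and must close an induction (or a supremum argument over all tails simultaneously) using $\sum_n b_n^*\beta^{-n}=1$ to conclude the tail value is at most $1$, and then a separate argument to exclude equality -- note that lexicographic order does \emph{not} imply value order for arbitrary digit sequences, so ``the condition forces the tail value to stay in $[0,1)$'' cannot be asserted without this step. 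Likewise, in the forward direction the strict inequality $\mathrm{d}(y)<\mathrm{d}_\beta^*(1)$ for all $y<1$ (in particular when $\mathrm{d}_\beta(1)$ is finite and is replaced by its periodic quasi-greedy version) needs the fact that $\mathrm{d}_\beta^*(1)$ itself represents $1$ and is attained by no $y<1$. You have correctly located where the difficulty lies, but until those two steps are carried out the proposal does not yet constitute a proof.
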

According to this result we call a number $\beta$ such that
$\mathrm{d}_\beta(1)$ is eventually periodic a Parry number. In the present
example we assume that $\beta$ is such a number.



For the padding size we denote by $\mathrm{d}_\beta(1)=b_1\ldots
b_t(b_{t+1}\ldots b_{t+p})^\womega$ the $\beta$-expansion of $1$. If
$1$ has a finite expansion then we set $p=0$. We are looking for the
longest possible sequence of zeroes occurring in the expansion of
$1$. As one easily checks, the longest occurs if
$b_1=\cdots=b_{t+{p-1}}=0$ and $b_{t+p}\neq0$. Thus we can set the
padding size $j$ to be
\[
j=t+p.
\]

We wish to minimize the length of a cylinder set defined by a word of length
$\womega$.  Define

\begin{displaymath}
\phi_\beta(\womega)=\left\{ \begin{array}{ll}
1 & \textrm{if $1 \leq \womega \leq t$}\\
r & \textrm{if $t+(r-2)p \leq \womega \leq t+(r-1)p$}\\
\end{array} \right. . 
\end{displaymath}

Then the length of this interval is at least $\beta^{-(t+\phi_\beta(\womega)p)}$.  We use the fact that $\mu_\beta(I) \geq(1-1/\beta) \lambda(I)$ and put
$$
M_i=\max\left(\frac {\beta^{t+\phi_\beta(i)p}} {1-\frac {1} {\beta}}, \ceil{\beta}^{2i} \log i  \right).
$$


Put $\mathbf{w}_i=\mathbf{p}_{\ceil{\beta},i,M_i}$ and $q_i=\ceil{\beta}$. Note that
$\lim_{i \to \infty} \frac {\phi(i)} {i/p}=1$, so for large $i$

$$
(i+j) \ceil{\beta}^{2i} \log i \leq \lxi \leq (i+j)
\left(\ceil{\beta}^{2i}\log i+\ceil{\beta}^i \right)
$$


Thus, for large $i$
$$
\lxi \approx  i \ceil{\beta}^{2i} \log i.
$$
Put $\ell_i=i^{2i}$ and the computation follows the same lines as above.

\subsection{Continued fraction expansion}

For a word $\mathbf{b}=b_1\ldots b_i$, let $\Delta_\mathbf{b}$ be the
set of all real numbers in $(0,1)$ whose first $i$ digits of it's
continued fraction expansion are equal to $\mathbf{b}$.  Put
$$
\mu(\mathbf{b})=\frac {1} {\log 2} \int_{\Delta_\mathbf{b}} \frac {dx} {1+x}.
$$
If there is an index $n$ such that $b_n>i$, then let
$\nu_i(\mathbf{b})=0$.  Let $S=\{n : b_n=i\}$.  For $i<8$, set
$\nu_i(\mathbf{b})=\mu(\mathbf{b})$.  For $i \geq 8$, if
$S=\emptyset$, then let $\nu_i(\mathbf{b})=\mu(\mathbf{b})$.  If $S
\neq \emptyset$, then let
$$
\nu_i(\mathbf{b})=\sum_{\mathbf{b}'} \mu(\mathbf{b}'),
$$
where the sum is over all words $\mathbf{b}'=b'_1\ldots b'_i$ such
that for each index $n$ in $S$, $b'_n \geq i$.

Put
$m_i=\min_{\mathbf{b} \in \mathscr{D}_{\nu_i}, |\mathbf{b}|=i} \nu_i(\mathbf{b}).$
  We wish to find a lower bound for $m_i$.  If $\mathbf{b}=b_1\ldots b_k$, then let $$\frac {p_k} {q_k}=\frac {1} {b_1+\frac {1} {b_2+\ddots+\frac{1} {b_k}}}.$$  It is well known that $\lambda(\Delta_\mathbf{b})=\frac {1} {q_k(q_k+q_{k-1})}$ and $\mu(\mathbf{b}) > \frac {1} {2\log 2} \lambda(\Delta_\mathbf{b})$.

Thus, we may find a lower bound for $m_i$ by minimizing $\frac {1} {q_i(q_i+q_{i-1})}$ for words $\mathbf{b}$ in $\mathscr{D}_{\nu_i}$.  The minimum will occur for $\mathbf{b}=ii\ldots i$.  It is known that $q_n=iq_{n-1}+q_{n-2}$ if we set $q_0=1$ and $q_1=i$. Set
$$
r_1=\frac {i+\sqrt{i^2+4}} {2},\quad r_2=\frac {i-\sqrt{i^2+4}} {2}.
$$
Then
$$
q_n=\frac {r_1^{n+1}-r_2^{n+1}} {\sqrt{i^2+4}}.
$$
Thus,
$$
\frac {1} {q_i(q_i+q_{i-1})}=\frac {i^2+4} {(r_1^{i+1}-r_2^{i+1})((r_1^{i+1}+r_1^i)-(r_2^{i+1}-r_2^i))}>\frac{\log 2} {i^{2i}} \hbox{ for }i \geq 8.
$$
Thus, $m_i > \frac {1} {2\log 2} \left( \frac {\log 2} {i^{2i}} \right)=\frac {1} {2} i^{-2i}$.
Let $M_i = 2i^{2i} \log i$, $j=0$,  $\mathbf{w}_i=\mathbf{p}_{i+1,i,M_i}$. 
Set $\ell_i=0$ for $i<8$ and $\ell_i=\floor{i^2 \log i}$ for $i \geq 8$.  Then for $i \geq 9$
$$
\frac {\ell_{i-1}} {\ell_i} \frac {|\mathbf{w}_{i-1}|} {\lxi} i< \frac {2(i-1)^{2i-1}+i^{i-1}} {2i^{2i}}=\left(1-\frac {1} {i}\right)^{2i} \frac {1} {i-1}+\frac {1} {2i^{i+1}} \to 0
$$
and
$$
\frac {|\mathbf{w}_{i+1}|} {\ell_i \lxi} \leq \frac {2(i+1)^{2i+3}+(i+2)^{i+1}} {i^2 \log i \cdot 2i^{2i+1}}=\left(1+\frac{1}{i}\right)^{2i} \frac {(i+1)^3} {i^3 \log i} + o(i^{-i}) \to 0.
$$
By \refl{mainlemma} the number whose digits of its continued fraction
expansion are formed by $\mathbf{w}_1^{\odot \ell_1}\odot\mathbf{w}_2^{\odot
  \ell_2}\odot\cdots$ is normal with respect to the continued fraction
expansion.

\section*{Acknowledgment}

Research of the second author is partially supported by the U.S. NSF
grant DMS-0943870.

Parts of this research work were done when the authors were visiting
the Department of Analysis and Computational Number Theory at Graz
University of Technology. Their stay was supported by FWF project
P26114. The authors thank the institution for its hospitality.


\providecommand{\bysame}{\leavevmode\hbox to3em{\hrulefill}\thinspace}
\providecommand{\MR}{\relax\ifhmode\unskip\space\fi MR }
\providecommand{\MRhref}[2]{%
  \href{http://www.ams.org/mathscinet-getitem?mr=#1}{#2}
}
\providecommand{\href}[2]{#2}

\end{document}